\documentclass[preprint,10pt]{elsarticle}

\usepackage{amssymb}

\usepackage{algorithmicx}
\usepackage[algoruled,linesnumbered,noline,scleft,nofillcomment]{algorithm2e}
\usepackage{algpseudocode}
\usepackage{graphicx}
\usepackage{lineno}
\usepackage[active]{srcltx}
\usepackage{subcaption}
\usepackage{amssymb}
\usepackage{amsmath}
\usepackage{listings}
\usepackage{booktabs}
\usepackage{float}
\usepackage{xcolor} 
\usepackage{listings}
\usepackage{url} 
\definecolor{mygreen}{RGB}{28,172,0} 
\definecolor{mylilas}{RGB}{170,55,241}

\newtheorem{Theorem}{\bf Theorem}[section]
\newtheorem{Corollary}{\bf Corollary}
\newtheorem{proposition}{\bf Proposition}
\newproof{pf}{Proof}
\DeclareFontFamily{\encodingdefault}{\ttdefault}{\hyphenchar\font=`\-}
\journal{}

\begin{document}

\lstset{language=MATLAB,%
    breaklines=true,%
    morekeywords={MATLAB2tikz},
    keywordstyle=\color{blue},%
    morekeywords=[2]{1}, keywordstyle=[2]{\color{black}},
    identifierstyle=\color{black},%
    stringstyle=\color{mylilas},
    commentstyle=\color{mygreen},%
    showstringspaces=false,
    numbers=left,%
    numberstyle={\tiny \color{black}},
    numbersep=9pt, 
}

\begin{frontmatter}



\title{A MATLAB Tool for the Stable Generation of Matrix Polynomial Evaluation Schemes with Two-Product Savings}

\author[imm]{J. Ib\'a\~nez}
\ead{jjibanez@dsic.upv.es}

\author[iteam]{J. Sastre}
\ead{jsastrem@upv.es}

\author[i3m]{J. M. Alonso}
\ead{jmalonso@dsic.upv.es}

\author[imm]{E.~Defez}
\ead{edefez@imm.upv.es}

\address[i3m]{Instituto de Instrumentaci\'{o}n para Imagen Molecular.}
\address[iteam]{Instituto de Telecomunicaciones y Aplicaciones Multimedia.}
\address[imm]{Instituto de Matem\'{a}tica Multidisciplinar.}
\address{Universitat Polit\`{e}cnica de Val\`{e}ncia, Camino de Vera s/n, 46022, Valencia. Spain}


\begin{abstract}
Computing numerical approximations of matrix functions frequently relies on the efficient evaluation of high-degree matrix polynomials. Although computational bounds are historically governed by the Paterson--Stockmeyer (PS) method, recent theoretical developments have demonstrated the viability of evaluation schemes that eliminate two matrix products ($2M$). Existing literature documents stable instances of this $2M$ reduction only for isolated cases, such as specific degrees of Taylor approximations for the matrix exponential and the matrix logarithm. However, a generalized approach for arbitrary polynomials remains unestablished. To address this limitation, this work presents a software-driven procedure that extends these computational savings to polynomials of degrees $m \in \{18, 21, 24, 26, 27, 28\}$ and all $m \ge 30$, requiring primarily a non-zero leading coefficient. Since the underlying evaluation coefficients must be determined by solving systems of nonlinear equations (SNEs), selecting a numerically stable solution set is critical. We introduce an automated verification routine designed to filter and validate robust coefficient sets for floating-point execution. The primary contribution is a MATLAB implementation leveraging variable precision arithmetic to handle the underlying SNEs, verify stability, and project precision bounds. Numerical experiments involving various matrix functions verify that the developed implementation preserves or, in some instances, enhances the numerical accuracy of the classic PS method, while systematically achieving the theoretical reduction of $2M$.
\end{abstract}

\begin{keyword}


Matrix polynomials \sep PS method \sep MATLAB software \sep Numerical stability \sep Floating-point arithmetic \sep Matrix functions 
\MSC[2020] 15A16 \sep 65F30 \sep 65G50 \sep 65Y15
\end{keyword}

\end{frontmatter}



\section{Introduction}
\label{sec:intro}

In the field of scientific computing, the numerical evaluation of matrix functions is a ubiquitous task that frequently relies on polynomial approximations \cite{al2018truncated, Alonso2023Euler, caliari2019fly, Ibanez2021Taylor, bader2019computing}. For large-scale problems, the algorithmic efficiency of these evaluations is determined almost exclusively by the number of matrix-matrix multiplications ($M$), while the computational overhead of scalar scaling and matrix additions remains asymptotically negligible. Consequently, optimizing the multiplication count has been a central focus in numerical linear algebra. For decades, the standard reference for minimizing these operations has been the Paterson--Stockmeyer (PS) scheme \cite{PaSt73}. To establish the formal framework, let $A \in \mathbb{C}^{n \times n}$ be a given square matrix, and let the target matrix polynomial of degree $m$ be defined by the expression
\begin{equation}\label{eq_pm}
    P_{m}(A) = \sum_{k=0}^{m} p_{k} A^{k},
\end{equation}
where $p_k \in \mathbb{C}$ represent scalar coefficients.

To surpass the operational thresholds established by the PS method for arbitrary polynomials, researchers have shifted toward alternative strategies that reconstruct the target expression through linear combinations and products of lower-degree sub-polynomials \cite{sastre2018efficient}. This approach was recently formalized into a systematic framework in \cite{AlSaIbDe26} to achieve a consistent one-product ($1M$) reduction. In the concluding remarks of that study, the development of computational tools capable of capturing even greater computational savings was identified as a primary objective for future research. The present work directly addresses this goal by focusing on the systematic derivation of evaluation schemes that achieve a two matrix product ($2M$) reduction.

The mathematical basis for this further reduction relies on the evaluation formulas (64)--(66) introduced in \cite[Prop. 3]{SaIb21}. These formulas employ nested structures of degree $m=6s$, $s=3$, $4$, $\ldots$, which are constructed using the expressions of $4s$-degrees previously analyzed in \cite[Sec. 3]{sastre2018efficient} and \cite{AlSaIbDe26}. By equating this nested formulation to a general polynomial of degree $m=6s$, a system of nonlinear polynomial equations is generated. A key analytical result from \cite[Cor. 1]{SaIb21} demonstrates that this complex formulation can be algebraically simplified into a reduced system of nonlinear equations (SNEs) with $s$ equations and $s$ unknowns.  

Stable coefficients for schemes saving $2M$ have been only derived for isolated applications. For instance, stable formulas for Taylor approximations of the matrix exponential at degrees $m=6s=24$ and $30$ (corresponding to $s=4$ and $5$) were obtained in \cite[Sec. 3.4]{sastre2019boosting}. Building on those ideas, a stable solution for the degree-30 Taylor approximation of the matrix logarithm was provided in \cite[Ex. 3]{SaIb21}. Alternative structures based on linear combinations and products of polynomials have been also proposed for specific approximation degrees of the matrix exponential, sine, and cosine \cite{bader2019computing, SBC21, bader2022efficient}. From a theoretical perspective, the lower cost bounds for matrix polynomial evaluations have been recently characterized in \cite{JL25}, while the specific case of degree-20 polynomials derived from Conjecture 5.1 of that work has been addressed in \cite{sastre2025beyond}.

Despite these theoretical and specific advances, the literature currently lacks a generalized software-driven methodology capable of generating stable evaluation formulas that consistently yield a $2M$ savings over the PS scheme for arbitrary matrix polynomials. Because the coefficients that define these formulas are roots of SNEs, their numerical stability in floating-point arithmetic varies significantly. Consequently, identifying robust solution sets is mandatory to maintain precision. 

This paper introduces a systematic framework and an associated computational tool (\texttt{MatrixPolEval2.m}) designed to solve the underlying $s \times s$ SNEs and filter the resulting coefficient sets based on numerical stability criteria. The proposed methodology generates reliable evaluation schemes that achieve a $2M$ reduction for matrix polynomials covering degrees $m \in \{18, 21, 24, 26, 27, 28\}$ and all $m \ge 30$, subject primarily to a non-zero leading coefficient. Conversely, no computational savings can be achieved for degrees $m < 12$ and $m \in \{15, 16, 17\}$. One product ($1M$) reduction is achievable for the remaining degrees, specifically $m \in \{12, 13, 14, 19, 20, 22, 23, 25, 29\}$. This expands the available configurations to secure $1M$ savings, providing alternative options in scenarios where the \texttt{MatrixPolEval1.m} tool from \cite{AlSaIbDe26} fails to yield reliable accuracy feedback.

Throughout this manuscript, the set of non-negative integers $\{0, 1, 2, \dots\}$ is denoted by $\mathbb{N}$. For any real number $x$, the ceiling function $\lceil x \rceil$ represents the smallest integer greater than or equal to $x$, and the floor function $\lfloor x \rfloor$ represents the largest integer less than or equal to $x$.

The remainder of the paper is organized as follows. Section \ref{sec:PS} provides a concise overview of the PS baseline and the nested evaluation structures. Also, it presents the proposed systematic framework and its algorithmic implementation in MATLAB. Numerical experiments that demonstrate the efficiency and stability of the generated formulas are discussed in Section \ref{sec:numericalresults}. The concluding remarks are summarized in Section \ref{sec:conclusions}.

\section{Algorithmic Framework and Coefficient Formulation}\label{sec:PS}

This section describes the algorithmic implementation for the polynomial evaluation formulas. To make the manuscript self-contained, we first state the mathematical formulations introduced in \cite{SaIb21}, which establish the theoretical basis for the two-product (2M) reduction strategy.

\subsection{Mathematical Foundations for the 2M Reduction}
\label{subsec:math_foundations}

The computational approach is based on evaluating specific matrix polynomials. As demonstrated in \cite{{SaIb21}}, the coefficients of these polynomials must satisfy a set of relationships to equate to the target polynomial $P_m(A)$.

\begin{proposition}[Adapted from \cite{{SaIb21}}, Proposition 3]
\label{prop:eval_formulas}
Let $y_{0s}(A)$, $y_{1s}(A)$, and $y_{2s}(A)$ be the polynomials defined as:
\begin{align}
    y_{0s}(A) &= A^s \sum_{i=1}^{s} e_{s+i}A^i, \label{eq:y0s} \\
    y_{1s}(A) &= \sum_{i=1}^{4s} c_i A^i, \label{eq:y1s} \\
    y_{2s}(A) &= y_{1s}(A) \left( y_{0s}(A) + \sum_{i=1}^{s} e_i A^i \right) + \sum_{i=0}^{s} f_i A^i, \label{eq:y2s}
\end{align}
and let $P_m(A)$ be the target polynomial:
\begin{equation}
    P_m(A) = \sum_{i=0}^{6s} p_i A^i. \label{eq:Pm}
\end{equation}
Then, $y_{2s}(A)$ can be expanded as:
\begin{equation}
    y_{2s}(A) = \sum_{i=0}^{6s} a_i A^i, \label{eq:y2s_expanded}
\end{equation}
where the coefficients $a_i = a_i(c_j, e_k, f_l)$ depend on $c_j$ ($j = 1, 2, \dots, 4s$), $e_k$ ($k = 1, 2, \dots, 2s$), and $f_l$ ($l = 0, 1, \dots, s$). 

If the leading coefficient condition
\begin{equation}
    p_{6s} \neq 0 \label{eq:condition_b6s}
\end{equation}
is met, equating $y_{2s}(A) = P_m(A)$ implies 
\begin{equation}\label{eq:aibi}
a_i = p_i \textrm{ for } i = 0, 1, \dots, 6s.
\end{equation}
Under these conditions, the following dependencies between the polynomial coefficients are established for $k = 0, 1, \dots, s - 1$:
\begin{subequations}
\label{eq:dependencies}
\begin{align}
    c_{4s-k} &= c_{4s-k}(p_{6s}, p_{6s-1}, \dots, p_{6s-k}), \nonumber \\
    e_{2s-k} &= e_{2s-k}(p_{6s}, p_{6s-1}, \dots, p_{6s-k}), \label{eq:dep_a} \\[1ex]
    c_{3s-k} &= c_{3s-k}(p_{6s}, p_{6s-1}, \dots, p_{5s-k}, e_s, \dots, e_{s-k}), \label{eq:dep_b} \\[1ex]
    c_{2s-k} &= c_{2s-k}(p_{6s}, \dots, p_{4s-k}, e_s, \dots, e_1), \label{eq:dep_c} \\[1ex]
    c_{s-k}  &= c_{s-k}(p_{6s}, \dots, p_{3s-k}, e_s, \dots, e_1). \label{eq:dep_d}
\end{align}
\end{subequations}
\end{proposition}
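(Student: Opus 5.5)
The plan is to expand $y_{2s}(A)$ explicitly and compare coefficients of powers of $A$ from the top degree $6s$ downward, using triangularity in the unknowns. First I would write $y_{0s}(A)+\sum_{i=1}^s e_iA^i=\sum_{i=1}^{2s} e_iA^i$, since $y_{0s}(A)=\sum_{i=1}^{s}e_{s+i}A^{s+i}$. Then
\begin{equation*}
y_{2s}(A)=\sum_{j=1}^{4s}\sum_{i=1}^{2s} c_j e_i A^{i+j}+\sum_{l=0}^{s} f_lA^l ,
\end{equation*}
so for $n\ge s+1$ we get $a_n=\sum_{i+j=n} c_je_i$, and for $n\le s$ we get $a_n=f_n+\sum_{i+j=n}c_je_i$. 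This gives the expansion \eqref{eq:y2s_expanded}, with $a_i$ depending on $c_j,e_k,f_l$ as claimed. Because $A$ is arbitrary, equality $y_{2s}(A)=P_m(A)$ as an identity in $A$ (equivalently, as polynomials) is the same as \eqref{eq:aibi}. The only representation point is that distinct monomials are independent when the identity is required for all $A$, or as formal polynomials.

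For the dependencies I would proceed by induction on $k$. Start at the top degree: $a_{6s}=c_{4s}e_{2s}=p_{6s}\neq0$, so both factors are nonzero, and one may normalize. The standard choice in \cite{SaIb21} is to fix a scaling, e.g. $c_{4s}=1$ or $c_{4s}=e_{2s}$, since the product $y_{1s}\cdot(\cdot)$ is invariant under $c\mapsto\lambda c$, $e\mapsto e/\lambda$. This gives $c_{4s},e_{2s}$ as functions of $p_{6s}$.

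The coefficient at degree $6s-k$ is
\begin{equation*}
\sum_{r=0}^{k} c_{4s-r}e_{2s-k+r}.
\end{equation*}
It is linear in the two new unknowns $c_{4s-k}$ and $e_{2s-k}$, with coefficients $e_{2s}$ and $c_{4s}$ respectively. The remaining terms are already known functions of $p_{6s},\dots,p_{6s-k+1}$. This single equation has two unknowns, so the pairing must exploit the normalization freedom further, or the structure used in \cite{SaIb21}. There $y_{1s}$ is itself built from the $4s$-degree scheme of \cite{sastre2018efficient}, which couples $c_{4s-k}$ with $e_{2s-k}$; for instance $y_{1s}$ is taken as a square-like expression whose top coefficients are determined by $e$. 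I would adopt that coupling so that each step solves for the pair, dividing by $c_{4s}$ or $e_{2s}$, which are nonzero by \eqref{eq:condition_b6s}. This yields \eqref{eq:dep_a} for $k=0,\dots,s-1$.

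Next I would handle degrees $5s-k$ for $k=0,\dots,s-1$. The top $e$'s $e_{2s},\dots,e_{s+1}$ and top $c$'s $c_{4s},\dots,c_{3s+1}$ are now known. The coefficient equation at degree $5s-k$ involves $c_{3s-k}e_{2s}$, plus terms with $c_{3s-k+r}$, $r\ge1$, already determined, plus terms $c_{4s-r}e_{s-k+r}$ with lower $e$'s $e_s,\dots,e_{s-k}$. Solving linearly via $e_{2s}\ne0$ gives \eqref{eq:dep_b}. At this stage the $e_1,\dots,e_s$ remain free; they are exactly the $s$ unknowns of the reduced SNE of \cite[Cor.~1]{SaIb21}.

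Continuing identically at degrees $4s-k$ and $3s-k$ gives $c_{2s-k}$ and $c_{s-k}$ linearly, again dividing by $e_{2s}$. These depend on all of $e_1,\dots,e_s$, because by then every lower $e$ appears. This gives \eqref{eq:dep_c} and \eqref{eq:dep_d}. The remaining degrees $2s,\dots,s+1$ give the $s$ nonlinear equations in $e_1,\dots,e_s$, and degrees $\le s$ give the $f_l$ explicitly.

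The main obstacle is the top block: the degree-$(6s-k)$ equation alone cannot fix both $c_{4s-k}$ and $e_{2s-k}$. I will need the precise structure or normalization of $y_{1s}$ from \cite{SaIb21} to justify the paired dependency \eqref{eq:dep_a}. Checking index bookkeeping, so that exactly the listed $e$'s appear at each stage, is routine.
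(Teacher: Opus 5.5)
\textbf{There is a genuine gap in the top block.} Your treatment of the degrees $5s-k$, $4s-k$, $3s-k$ is correct: you divide by $e_{2s}$, and exactly the listed $p$'s and $e$'s appear. The $s$ residual equations in $e_1,\dots,e_s$ at degrees $2s,\dots,s+1$ and the explicit $f_l$ are also right. The problem is the top block, which you flag but do not close.

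The normalization route you propose cannot close it. Rescaling $c\mapsto\lambda c$, $e\mapsto e/\lambda$ removes only one parameter. But the $s$ equations at degrees $6s,\dots,5s+1$ involve $2s$ unknowns ($c_{3s+1},\dots,c_{4s}$ and $e_{s+1},\dots,e_{2s}$). So after normalizing, the steps $k=1,\dots,s-1$ remain underdetermined.

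Indeed, if $y_{1s}$ is an arbitrary degree-$4s$ polynomial, you may prescribe $e_{s+1},\dots,e_{2s}$ freely (with $e_{2s}\neq0$) and solve linearly for $c_{4s},\dots,c_{3s+1}$. Hence \eqref{eq:dep_a} cannot be derived from the statement as quoted here; the paper itself gives no proof and defers to \cite{SaIb21}. Also, \cite{SaIb21} does not impose $c_{4s}=1$ or $c_{4s}=e_{2s}$.

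\textbf{The missing ingredient} is a structural hypothesis that the adapted statement suppresses. It belongs to the setting of \cite[Prop.~3]{SaIb21}, and this paper uses it in step~2 of Algorithm~\ref{Alg_2} through $\bar Y_{aux}=\bar Y_1-\bar Y_0^2$. The hypothesis is that $y_{1s}$ is evaluated by one of the $4s$-schemes \eqref{Eq_Y11}--\eqref{Eq_Y13} built on the same $y_{0s}$. In each of these schemes the powers $A^{3s+1},\dots,A^{4s}$ arise only from $y_{0s}^2$, so
\begin{equation*}
c_{4s-k}=\sum_{\substack{i+j=4s-k\\ s+1\le i,j\le 2s}} e_i e_j ,\qquad k=0,1,\dots,s-1 .
\end{equation*}

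With this constraint the top block is triangular:
\begin{itemize}
\item At degree $6s$: $a_{6s}=c_{4s}e_{2s}=e_{2s}^3=p_{6s}$. This fixes $e_{2s}$ as a cube root of $p_{6s}$, and then $c_{4s}=e_{2s}^2$.
\item At degree $6s-k$: the new unknown $e_{2s-k}$ enters through $c_{4s}e_{2s-k}=e_{2s}^2e_{2s-k}$ and through $c_{4s-k}e_{2s}=(2e_{2s}e_{2s-k}+\cdots)e_{2s}$. So it appears linearly with coefficient $3e_{2s}^2\neq0$, and every other term involves only $e_{2s},\dots,e_{2s-k+1}$.
\item Solving gives $e_{2s-k}$ as a function of $p_{6s},\dots,p_{6s-k}$. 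The displayed constraint then gives $c_{4s-k}$, which is \eqref{eq:dep_a}.
\end{itemize}
From there your remaining blocks go through unchanged.
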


The relationships derived in Proposition \ref{prop:eval_formulas} form a system of $6s + 1$ nonlinear equations with $7s + 1$ variables. Solving this full system directly presents computational challenges as the polynomial degree increases. To address this, the system is reformulated based on the following result.

\begin{Corollary}[Adapted from \cite{{SaIb21}}, Corollary 1]
\label{cor:reduced_system}
If condition \eqref{eq:condition_b6s} holds, then the system of $6s + 1$ equations with $7s + 1$ variables arising from $a_i = p_i$ from \eqref{eq:aibi} can be reduced using variable substitution to a system of $s$ equations with $s$ variables. If there exists at least one solution for this reduced system, then all the coefficients for equations \eqref{eq:y0s}--\eqref{eq:y2s} can be calculated using that solution.
\end{Corollary}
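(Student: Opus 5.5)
We must show how the $6s+1$ equations $a_i=p_i$ collapse to $s$ equations in $s$ unknowns. Since the system is triangular in blocks, we choose the unknowns to be $e_1,\dots,e_s$ (the low-order coefficients of the second factor) and eliminate everything else by back-substitution, following the dependency chain \eqref{eq:dependencies} of Proposition~\ref{prop:eval_formulas}. We first count variables: there are $4s$ values $c_j$, $2s$ values $e_k$ and $s+1$ values $f_l$, for $7s+1$ in total. The $s+1$ equations $a_i=p_i$, $i=0,\dots,s$, are linear in $f_0,\dots,f_s$, each with coefficient $1$. They therefore just define the $f_l$ once everything else is known, which removes $s+1$ equations and $s+1$ variables and leaves $5s$ equations, for $i=s+1,\dots,6s$, in the $6s$ unknowns $c_j,e_k$.

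Next we process the top block $i=6s,\dots,5s+1$. The leading term of $y_{1s}(y_{0s}+\sum e_iA^i)$ is $c_{4s}e_{2s}A^{6s}$, so $c_{4s}e_{2s}=p_{6s}$. This is where \eqref{eq:condition_b6s} is used: it guarantees both factors are nonzero. We fix the scaling freedom of the product, for instance by setting $e_{2s}=1$ or $e_{2s}=c_{4s}$ (i.e.\ $e_{2s}=\pm\sqrt{p_{6s}}$); such a free normalization is exactly one extra degree of freedom. For $k=1,\dots,s-1$, the coefficient of $A^{6s-k}$ involves $c_{4s}e_{2s-k}+c_{4s-k}e_{2s}$ plus already-known terms. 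Imposing a symmetric choice ($c_{4s-k}$ proportional to $e_{2s-k}$, as in the original construction), or using the remaining freedom, each equation determines the new pair explicitly. This yields \eqref{eq:dep_a}. We state precisely which normalization we adopt, because it is the source of the count $7s+1-(6s+1)=s$ free parameters, and these are then pinned down by making the final $s$ equations determining.

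We then proceed downward in three blocks of $s$ equations. For $i=5s-k$, the new unknown entering linearly is $c_{3s-k}$, with coefficient $e_{2s}\neq0$; the other terms involve the known top coefficients and $e_s,\dots,e_{s-k}$. This gives \eqref{eq:dep_b}. For $i=4s-k$, the unknown $c_{2s-k}$ enters linearly with coefficient $e_{2s}$, giving \eqref{eq:dep_c}. For $i=3s-k$, the same reasoning applied to $c_{s-k}$ gives \eqref{eq:dep_d}. Every $c_j$ is then a rational (in fact polynomial, after dividing by $e_{2s}$) function of the $p_i$ and of $e_1,\dots,e_s$. Since $y_{1s}$ has no constant term, the remaining equations for $i=s+1,\dots,2s$ have all their $c_j$ already expressed. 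Substituting gives exactly $s$ polynomial equations in the $s$ unknowns $e_1,\dots,e_s$.

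Finally, given any solution of this reduced system, we recover all coefficients by evaluating the explicit back-substitution formulas in the order $e_{2s-k},c_{4s-k}$, then $c_{3s-k}$, $c_{2s-k}$, $c_{s-k}$, and finally $f_l$. The main obstacle is the careful bookkeeping of which new variable each coefficient equation $a_i$ contains linearly and with a nonzero pivot. That the pivots are $e_{2s}$ or $c_{4s}$, both nonzero by \eqref{eq:condition_b6s}, must be verified from the degree structure of $y_{0s}$ (which spans degrees $s+1,\dots,2s$) and of $y_{1s}$. We would check this by writing $a_i=\sum_{j+k=i}c_je_k$ for $i>s$ and tracking the maximal indices.
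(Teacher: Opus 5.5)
\textbf{Verdict: genuine gap, in the top block.} The rest of your bookkeeping is correct and matches Algorithm~\ref{Alg_2}. You correctly take $e_1,\dots,e_s$ as the reduced unknowns. The equations of degrees $5s,\dots,2s+1$ are solved one at a time for $c_{3s},\dots,c_1$, with pivot $e_{2s}$. The equations of degrees $s+1,\dots,2s$ then involve only $e_1,\dots,e_s$, and form the reduced system of step~8. Finally, $f_0,\dots,f_s$ are recovered linearly.

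The gap is how you remove the $s$ surplus unknowns in the top block. They are not a free normalization. The $6s$ scheme needs $y_{1s}$ to be evaluable by one of \eqref{Eq_Y11}--\eqref{Eq_Y13} with the \emph{same} $Y_0=y_{0s}$. This forces the $s$ leading coefficients of $y_{1s}$ to equal those of $y_{0s}^2$:
$c_{4s}=e_{2s}^2$, $c_{4s-1}=2e_{2s}e_{2s-1}$, $c_{4s-2}=2e_{2s}e_{2s-2}+e_{2s-1}^2$, and so on.
This is exactly step~2 of Algorithm~\ref{Alg_2}, which uses the auxiliary polynomial $\bar Y_{aux}=\bar Y_1-\bar Y_0^2$. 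These $s$ extra equations turn the $6s+1$ equations in $7s+1$ unknowns into a determined problem. They also make \eqref{eq:dep_a} a consequence of $a_i=p_i$ rather than a choice.

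With this constraint, only $c_j$ with $j\ge 3s+1$ and $e_l$ with $l\ge s+1$ reach degrees $\ge 5s+1$. So the equations of degrees $5s+1,\dots,6s$ say that the top $s$ coefficients of $y_{0s}^3$ equal $p_{5s+1},\dots,p_{6s}$. That gives $e_{2s}^3=p_{6s}$. Each further $e_{2s-k}$ then enters linearly with pivot $3e_{2s}^2\neq 0$, and this is where \eqref{eq:condition_b6s} is really used.

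Your normalizations are not ``the original construction'' and are generically incompatible with this constraint. These are $e_{2s}=1$, $e_{2s}=c_{4s}$ (so $e_{2s}=\pm\sqrt{p_{6s}}$), and $c_{4s-k}$ proportional to $e_{2s-k}$. For example, proportionality would need $\lambda=e_{2s}$ from $c_{4s}$ but $\lambda=2e_{2s}$ from $c_{4s-1}$.

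Read literally with an unconstrained $y_{1s}$, fixing $s$ unknowns by fiat does give some square system whose solutions satisfy $y_{2s}=P_m$. But the resulting $y_{1s}$ generally does not have leading part $y_{0s}^2$. Its coefficients then cannot be fed to Algorithm~\ref{Alg_3}, and $y_{1s}$ would need its own $Y_0$. That costs an extra product and reduces the saving to $1M$.

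Also, your $s$ normalization parameters are not ``pinned down by making the final $s$ equations determining.'' Those equations determine $e_1,\dots,e_s$.

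The fix is to replace your top-block paragraph with the $y_{0s}^2$ constraint and the $y_{0s}^3$ matching. With that change, the rest of your argument goes through unchanged.
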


\subsection{Symbolic Construction and Algorithm Design}
\label{subsec:symbolic_construction}

The algorithms presented in this work, implemented in the \texttt{MatrixPolEval2.m} routine, are designed to programmatically execute the variable substitutions described in Corollary \ref{cor:reduced_system}. Instead of processing the original equations simultaneously, the software systematically generates the reduced $s \times s$ nonlinear system. 

Once the simplified system is constructed, the routine uses MATLAB's symbolic toolbox and the \texttt{vpasolve} function to locate a viable solution. The subsequent steps of the algorithm then back-substitute these values to evaluate all remaining coefficients defined in Proposition \ref{prop:eval_formulas}.

To translate this mathematical framework into an efficient computational algorithm for matrix polynomials of degree $m = 6s$, specific adjustments to the data structures are required. While the underlying nested matrix structures rely directly on Proposition \ref{prop:eval_formulas} (assuming the non-zero leading coefficient condition defined in \eqref{eq:condition_b6s}), maintaining multiple sets of variable names ($c_j, e_k, f_l$) is impractical for software implementation. 

To enhance notational clarity throughout the algorithmic development, the matrix polynomials $y_{is}(A)$ for $i \in \{0, 1, 2\}$ are abbreviated as $Y_i$. Furthermore, to facilitate straightforward programming, array indexing, and vectorization within MATLAB, the distinct coefficient sets from the original proposition are mapped into a single, contiguous sequence of scalar coefficients denoted as $c_k$, for $k = 0, 1, \dots, 6s$. 

Given a target matrix polynomial of degree $m=6s$, as that expressed in \eqref{eq_pm},
the objective is to develop a systematic procedure to determine the scalar coefficients $c_{0:6s}$ that satisfy the evaluation scheme. To achieve this, the primary sub-polynomial originally presented in \cite[Eq.~(64)]{SaIb21} is restated as $Y_0$ and defined by:
\begin{equation} \label{Eq_Y0}
Y_0 = A^s \sum_{i = 1}^s c_{5s + i} A^i.
\end{equation}

By applying the hierarchical nesting framework described in \cite[Sec.~3]{sastre2019boosting}, the formulation of the intermediate polynomial $Y_1$ from \cite[Eq.~(65)]{SaIb21} is dictated by the underlying algebraic constraints and the availability of numerically stable solutions for a given degree. Depending on these criteria, $Y_1$ can be configured via three distinct structural variations, corresponding to the alternative schemes established in \cite[Eqs.~(15)--(17)]{AlSaIbDe26}:
\begin{equation} \label{Eq_Y11}
Y_1 = \left( Y_0 + \sum_{i = 1}^s c_{4s + i} A^i \right) \left( Y_0 + \sum_{i = 2}^s c_{3s + i} A^i \right) + c_{3s + 1} Y_0 + \sum_{i = 1}^s c_{2s + i} A^i,
\end{equation}
\begin{equation} \label{Eq_Y12}
Y_1 = \left( Y_0 + \sum_{i = 0}^s c_{4s + i} A^i \right) \left( Y_0 + \sum_{i = 2}^s c_{3s + i - 1} A^i \right) + \sum_{i = 1}^s c_{2s + i} A^i,
\end{equation}
\begin{equation} \label{Eq_Y13}
Y_1 = \left( Y_0 + \sum_{i = 1}^s c_{4s + i} A^i \right) \left( Y_0 + \sum_{i = 1}^s c_{3s + i} A^i \right) + \sum_{i = 1}^s c_{2s + i} A^i.
\end{equation}
Finally, the encompassing matrix polynomial $Y_2$ is constructed using the preceding stages as follows:
\begin{equation} \label{Eq_Y2}
Y_2 = Y_1 \left( Y_0 + \sum_{i = 1}^s c_{s + i} A^i \right) + \sum_{i = 0}^s c_i A^i.
\end{equation}

Once reconstructed, $Y_0$, $Y_1$, and $Y_2$ could be alternatively expressed according to the classical formulation given in \eqref{eq_pm} as:

\begin{equation}\label{Eq_Y012p}
    Y_0 = \sum_{k=1}^{s} y_{0_k} A^{s+k}, \
    Y_1 = \sum_{k=1}^{4s} y_{1_k} A^{k}, \
    Y_2 = \sum_{k=0}^{6s} y_{2_k} A^{k},
\end{equation}
where $y_{0_k}$, $y_{1_k}$, and $y_{2_k}$$ \in \mathbb{C}$ would be their coefficients.

By equating the coefficients $y_{2_k}$ of  $Y_2$ resulting from \eqref{Eq_Y012p} with the value of the components $p_k$ of $P_m(A)$ from \eqref{eq_pm}, we would obtain an SNEs with $6s+1$ equations and $6s+1$ unknowns. From a computational point of view, solving this SNEs using MATLAB would be a very hard task. In fact, to find the solution for moderately large values of $s$ is not possible. 

One way to overcome this difficulty is to employ the polynomials $\bar Y_1, \bar Y_2$, and $\bar y_3$ that appear below:

\begin{equation} \label{Eq_Y0m}
	{\bar Y_0} = {A^s}\sum\limits_{i = 1}^s {{{\bar c}_{6s + i}}{A^i}},
\end{equation}
\begin{equation} \label{Eq_Y1m}
	{\bar Y_1} = \sum\limits_{i = 1}^{4s} {{{\bar c}_{2s + i}}{A^i}},
\end{equation}
\begin{equation} \label{Eq_Y2m}
	{\bar Y_2} = {\bar Y_1}\left( {{{\bar y}_0} + \sum\limits_{i = 1}^s {{{\bar c}_{s + i}}{A^i}} } \right) + \sum\limits_{i = 0}^s {{{\bar c}_i}{A^i}}.
\end{equation}

Of course, $\bar Y_0$, $\bar Y_1$, and $\bar Y_2$ could also be written as:

\begin{equation}\label{Eq_Y012mp}
    \bar Y_0 = \sum_{k=1}^{s} \bar y_{0_k} A^{s+k}, \
    \bar Y_1 = \sum_{k=1}^{4s} \bar y_{1_k} A^{k}, \
    \bar Y_2 = \sum_{k=0}^{6s} \bar y_{2_k} A^{k},
\end{equation}
where $y_{0_k}$, $y_{1_k}$, and $y_{2_k}$$ \in \mathbb{C}$ would represent their coefficients.

These polynomials were used by authors in \cite{SaIb21}, since determining their coefficients reduces to solve an SNEs with $s$ equations and unknowns (see Corollary~1). Besides, given the three possible forms of the polynomial $Y_1$ and taking into account that it is actually a polynomial of degree $4s$, its coefficients from \eqref{Eq_Y1m} can be determined using the procedure described in \cite{AlSaIbDe26}. Hence, for the calculation of the coefficients $c_{0:6s}$, we firstly compute $\bar c_{0:7s}$ from \eqref{Eq_Y0m}-\eqref{Eq_Y2m}, considering that $Y_0=\bar Y_0$, $Y_1=\bar Y_1$ and $Y_2=\bar Y_2=P_m(A)=\sum_{k=0}^{m} p_{k} A^{k}$.

To establish a baseline for computational cost comparisons, recall that the traditional PS scheme evaluates \eqref{eq_pm} by grouping the polynomial into blocks using an optimal splitting parameter $s$ \cite[Algorithm B]{PaSt73}. Specifically, the polynomial is rearranged into a nested structure:
\begin{equation}\label{PPS}
\begin{aligned}
    P_{m}(A) = & \left( \dots \left( \left( \sum_{j=0}^{s} p_{m-j} A^{s-j} \right) A^s + \sum_{j=1}^{s} p_{m-s-j} A^{s-j} \right) A^s + \dots \right) A^s \\
    & + \sum_{j=1}^{s} p_{s-j} A^{s-j}.
\end{aligned}
\end{equation}
The standard computational cost of the PS method, expressed in terms of matrix-matrix multiplications and denoted by $C_{PS}(m)$, is given by \cite{fasi2019optimality}:
\begin{equation}\label{eq:PScost}
    C_{PS}(m) = s + \left\lceil \frac{m}{s} \right\rceil - 2,
\end{equation}
where the optimal integer value for $s$ is either $\lfloor \sqrt{m} \rfloor$ or $\lceil \sqrt{m} \rceil$. When the target degree $m$ does not belong to the sequence of optimal PS degrees $\mathcal{M} = \{1, 2, 4, 6, 9, 12, \dots\}$, with $m^* \in \mathcal{M}$, and $m^* = s^2$ or $m^* = s(s+1)$, the polynomial can be evaluated using the next optimal degree $m^* = \min \{ k \in \mathcal{M} : k > m \}$ by setting the auxiliary leading coefficients to zero ($p_i = 0$ for $i = m+1, \dots, m^*$)~\cite{sastre2018efficient}.

Based on these relations, the proposed algorithms execute a reduction strategy inspired by Corollary \ref{cor:reduced_system} to resolve the underlying algebraic conditions. This process condenses an initial system of $6s+1$ polynomial equations with $7s+1$ variables into a reduced SNEs with $s$ equations and $s$ unknowns via systematic variable substitution utilizing the MATLAB Symbolic Math Toolbox. The resulting compact SNEs is subsequently solved using the \texttt{vpasolve} routine, leveraging its randomized search functionality and variable precision arithmetic (VPA) capabilities to isolate feasible solutions. Furthermore, the alternative structural variants \eqref{Eq_Y11}--\eqref{Eq_Y13} are exploited to generate distinct computational paths for evaluating the $4s$-degree polynomial represented in \eqref{Eq_Y1m}. 

As established in \cite{sastre2018efficient}, the cardinality of the solution space expands substantially as $s$ increases. To identify the most robust configuration among these multiple potential solution sets, an automated filtering process is applied based on the numerical stability verification criteria detailed in \cite[Sec.~3]{sastre2018efficient}. Following this framework, numerical stability is assessed via the error defined as:
\begin{equation}\label{eq:epsilon}
\epsilon =
\begin{cases}
\displaystyle \max_{k=0:m}\left|\frac{p_k-y_{2_k}}{p_k}\right|
& \text{if } p_k \neq 0 \\[1.2ex]
\displaystyle \max_{k=0:m}\left|p_k-y_{2_k}\right|
& \text{if } p_k = 0,
\end{cases}
\end{equation}
 where $p_k$ represents the components of the target matrix polynomial $P_m(A)$ and $y_{2_k}$ are the coefficients of the polynomial $Y_2$ reconstructed by using a given solution set $c_{0:m}$ rounded to finite-precision arithmetic. A solution set is deemed stable if $\epsilon$ is of the order of the unit roundoff $u$ (i.e., $u = 2^{-24}$ for IEEE single precision and $u = 2^{-53}$ for IEEE double precision). To ensure robustness and avoid division by zero, the algorithm automatically switches from relative error metrics to absolute one whenever a baseline coefficient vanishes ($p_k = 0$), facilitating the selection of the most accurate evaluation scheme.


Algorithm \ref{Alg_1} first calculates the coefficients $\bar c_{0:7s}$ that appear in formulas \eqref{Eq_Y0m}--\eqref{Eq_Y2m} and the coefficients $c_{0:6s}$ of formulas \eqref{Eq_Y0}--\eqref{Eq_Y2} then. Following the nested procedure introduced in \cite[Sec. 3]{sastre2019boosting} to reduce the number of matrix products in the $6s$ formulation, it is necessary to find the coefficients ${c_{2s + 1:5s}}$ of formulas \eqref{Eq_Y11}--\eqref{Eq_Y13} from the coefficients $\bar c_{2s + 1:7s}$ of formulas \eqref{Eq_Y0m}--\eqref{Eq_Y1m}, where computation of the matrix powers $A^{s+1},A^{s+2},\dots,A^{4s}$ from \eqref{Eq_Y1m} should be avoided. 

\begin{algorithm}[H]
	\DontPrintSemicolon
	\caption{$c=f(p)$: Given the vector $p = [{p_0},{p_1}, \cdots ,{p_{6s}}]$, which contains the $6s+1$ coefficients of a polynomial $P_m(A)$ of degree $m=6s$, this algorithm provides the vector $c = [{c_0},{c_1}, \cdots ,{c_{6s}}]$ that appear in expression \eqref{Eq_Y2m} such as $Y_2=P_m(A)$.}
	\label{Alg_1}
	$\bar c=f_1(p_{1:6s})$  (Algorithm \ref{Alg_2})\;
	\For{$j=1$ \KwTo $3$} {
        
        $\tilde c = {f_2}({\bar c_{2s + 1:6s}}, j, {\bar c_{6s + 1:7s}})$ (Algorithm \ref{Alg_3})\;
		$c^j=[p_0,\bar c_{1:2s},\tilde c_{1:3s},\bar c_{6s+1:7s}]$ \;
	}
	Select the best solution $c^j, j=1, 2, 3$.
\end{algorithm}

In step 1, the coefficients $\bar c_{1:7s}$ are determined by applying Algorithm \ref{Alg_2}. In step 3, for each of the expressions \eqref{Eq_Y11}--\eqref{Eq_Y13} related to $Y_1$, the elements ${c_{2s + 1:5s}}$ are obtained from the terms $\bar c_{2s + 1:7s}$ by applying Algorithm \ref{Alg_3}. In step 4, the vector $c^j$ is completed with all its components, considering the following:
\begin{itemize}
	\item $c_0=p_0$.
	\item ${c_{1:2s}}=\bar c_{1:2s}$, as all the coefficients corresponding to the power matrices $A,A^{2},\dots,A^{2s}$ of $Y_2$ and $\bar Y_2$ must be equal.
	\item ${c_{2s + 1:5s}}=\tilde c_{1:3s}$,  as $Y_1=\bar Y_1$.
	\item $c_{5s+1:6s}=\bar c_{6s+1:7s}$, as $Y_0=\bar Y_0$. 
\end{itemize}
Finally, in step 6, the best solution for the coefficients $c_{0:6s}$ that arise in the polynomial $Y_2$ is obtained according to the stability check from \cite[Sec. 3]{sastre2018efficient}.

\subsection{Auxiliary algorithms}\label{Sec_alg2}
Algorithm \ref{Alg_2} provides the coefficients $\bar c_{1:7s}$, such that $\bar Y_2=P_m(A)$, by solving the SNEs that sets  $7s$ coefficients of the polynomial $\bar Y_2-P_m(A)$ to 0. Note that there would be one more equation that has not been considered, given that $c_0 = p_0$.

\begin{algorithm}[H]
	\DontPrintSemicolon
	\caption{$\bar c=f_1(p)$: Given the vector $p = [{p_1}, \cdots ,{p_{6s}}]$, which contains the coefficients (except for $p_0$) of a polynomial $P_m(A)$ of degree $m=6s$, this algorithm calculates the vector $\bar c = [{\bar c_1}, \cdots ,{\bar c_{7s}}]$
    with the ascending order coefficients (except for $\bar c_0$) that appear in expression \eqref{Eq_Y2}, such that $\bar Y_2=P_m(A)$.}
	\label{Alg_2}
	Create the polynomials $\bar Y_0$, $\bar Y_1$, $\bar Y_2$ from \eqref{Eq_Y0m}--\eqref{Eq_Y2m}, and $\bar Y_{aux} = \bar Y_1-\bar Y_0^2$. Obtain the corresponding coefficients $\bar y_2$ and $\bar y_{aux}$, respectively from $\bar Y_2$ and $\bar Y_{aux}$. From now on, $\bar y_2 = \bar y_{2_{1:6s}}$ \;
	Express coefficients $\bar c_{6s + 1:7s}$ in terms of $\bar c_{5s + 1:6s}$ in the equations $\left( \bar y_{aux} = 0 \right)_{1:s}$ by solving the SNEs $\left( \bar y_{aux} = 0 \right)_{1:s}$ for the unknowns ${\bar c_{6s + 1:7s}}$\;
	Substitute ${\bar c_{6s + 1:7s}}$ into the SNEs $\bar y_2 - p=0$\;
	\For{$j=6s$ \KwTo $2s+1$} {
		Solve the equation ${({\bar y_2} - p = 0)_j}$ for ${\bar c_j}$\;
		Substitute ${\bar c_j}$ into the SNEs ${\bar y_2} - p = 0$  \;
	}
	Solve the SNEs ${({\bar y_2} - p = 0)_{s + 1:2s}}$ of $s$ equations for the unknowns ${\bar c_{s + 1:2s}}$\;
    Obtain the numerical values of ${\bar c_{2s + 1:6s}}$ from ${\bar c_{s + 1:2s}}$ obtained in step 8\;
	Calculate the numerical values of ${\bar c_{6s+1:7s}}$ from ${\bar c_{5s+1:6s}}$ (see step 2)\;
	Solve the SNEs ${({y_2} - p = 0)_{1:s}}$ of $s$ equations  for the unknowns ${\bar c_{1:s}}$
\end{algorithm}

In step 1 of Algorithm \ref{Alg_2}, the polynomials $\bar Y_0$, $\bar Y_1$, $\bar Y_2$, and $\bar Y_{aux} = \bar Y_1-\bar Y_0^2$ are generated, along with the vectors $\bar y_2$ and $\bar y_{aux}$ corresponding to the coefficients of $\bar Y_2$ and $\bar Y_{aux}$. 
Steps 2-3 eliminate ${\bar c_{6s + 1:7s}}$ from the SNEs $\bar y_2 - p = 0$, since they will depend just on ${\bar c_{5s + 1:6s}}$. We will denote it as ${\bar c_{6s + 1:7s}} = f({\bar c_{5s + 1:6s}})$.


The loop covering steps 4--7 also removes the unknowns ${\bar c_{2s + 1:6s}}$ in the SNEs $\bar y_2 - p=0$. As a consequence, the equations $(\bar y_2 - p = 0)_{2s + 1:6s}$ and the unknowns ${\bar c_{2s + 1:6s}}$ will depend only on ${\bar c_{s+1:2s}}$. In other words, $\bar c_{2s + 1:6s} = f(\bar c_{s + 1:2s})$.

In step 8, the values of ${\bar c_{s + 1:2s}}$ are computed by solving the corresponding SNEs. Then, in step 9, the numerical values of ${\bar c_{2s + 1:6s}}$ are obtained from the previous ones and from ${\bar c_{5s + 1:6s}}$. In step 10, the components $\bar c_{6s+1:7s}$ are determined from the expressions in which the depend on $\bar c_{5s + 1:6s}$,  obtained in step 2. Finally, the values of ${\bar c_{1:s}}$ are calculated in step 11 by solving the last SNEs.

The following listings contain the MATLAB codes corresponding to Algorithm \ref{Alg_2}. In these codes, \textrm{cell} type variables, such as \texttt{cs} and \texttt{y2}, are used to store all possible solutions. Their number of dimensions will vary in accordance with the number of solutions.

\begin{minipage}{\linewidth}
\begin{lstlisting}[caption=MATLAB code for step 1 (Algorithm \ref{Alg_2})., label=Mat1, numbers=none]
Y0=sum(c(6*s+1:7*s).*A.^(s+1:2*s));
Y1=sum(c(2*s+1:6*s).*A.^(1:4*s));
Y2=Y1*(Y0+sum(c(s+1:2*s).*A.^(1:s)))+sum(c(1:s).*A.^(1:s));
[y2{1},~]=coeffs(Y2,A);
y2{1}=y2{1}(end:-1:1)-p;
[yaux,~]=coeffs(Y1-Y0*Y0,A);
\end{lstlisting}
\end{minipage}

\begin{minipage}{\linewidth}
\begin{lstlisting}[caption=MATLAB code for steps 2--3 (Algorithm \ref{Alg_2})., label=Mat2,numbers=none]
sol=solve(yaux(1:s),c(6*s+1:7*s));
sol=struct2cell(sol);
sol=[sol{:}];
[n1,~]=size(sol);
aux=y2{1};
for j=1:n1
    y2{j}=aux;
    cs{j}(6*s+1:7*s)=sol(j,1:s);
    y2{j}=subs(y2{j},c(6*s+1:7*s),sol(j,1:s));
end
\end{lstlisting}
\end{minipage}

\begin{minipage}{\linewidth}
\begin{lstlisting}[caption=MATLAB code for steps 4--7 (Algorithm \ref{Alg_2})., label=Mat3,numbers=none]
j=1;
while j<=n1
    i=6*s;
    sol=solve(y2{j}(i),c(i));
    [nsol,~]=size(sol);
    aux_cs=cs;
    aux_y2=y2; 
    l=0;
    for k=1:nsol
        if isreal(sol(k))
            cs{j}=aux_cs{j};
            cs{j}(i)=sol(k);
            y2{j}=subs(aux_y2{j},c(i),sol(k));
            l=l+1;
        end
    end
    if l>0
        for i=6*s-1:-1:2*s+1
            sol=solve(y2{j}(i),c(i));
            cs{j}(i)=sol(1);
            y2{j}=simplify(subs(y2{j},c(i),sol(1)));
        end
        j=j+1;
    else
        n1=n1-1;
        for k=j:n1
            cs{k}=aux_cs{k+1};
            y2{k}=aux_y2{k+1};
        end
    end
end
\end{lstlisting}
\end{minipage}

\begin{minipage}{\linewidth}
\begin{lstlisting}[caption=MATLAB code for step 8 (Algorithm \ref{Alg_2})., label=Mat4,numbers=none ]
k=1;
while k<=n1
    sol=vpasolve(y2{k}(s+1:2*s),c(s+1:2*s),`Random',true);
    sol=struct2cell(sol);
    sol=[sol{:}];
    [nsol,~]=size(sol);
    h=[];
    for i=1:nsol
        if isreal(sol(i,:))
            h=[h,i];
        end
    end
    if ~isempty(h)
        n2=length(h);
        for j=1:n2
            cs{k,j}=cs{k};
            y2{k,j}=y2{k};
        end
        for j=1:n2
            for i=1:s
                cs{k,j}(s+i)=sol(h(j),i);
                y2{k,j}=subs(y2{k,j},c(s+i),sol(h(j),i));
            end
        end
        k=k+1;
    else
        n1=n1-1;
        for j=k:n1
            cs{j}=cs{j+1};
            y2{j}=y2{j+1};
        end
    end
end
\end{lstlisting}
\end{minipage}

\begin{minipage}{\linewidth}
\begin{lstlisting}[caption=MATLAB code for step 9 (Algorithm \ref{Alg_2})., label=Mat5,numbers=none]
for k1=1:n1
    for k2=1:n2
        for i=2*s+1:6*s 
            cs{k1,k2}(2*s+1:6*s)=subs(cs{k1,k2}(2*s+1:6*s),c(s+1:2*s),cs{k1,k2}(s+1:2*s));
        end
    end
end
\end{lstlisting}
\end{minipage}

\begin{minipage}{\linewidth}
\begin{lstlisting}[caption=MATLAB code for step 10 (Algorithm \ref{Alg_2})., label=Mat6,numbers=none]
for k1=1:n1
    for k2=1:n2
        for i=6*s+1:7*s
            for j=5*s+1:6*s
                cs{k1,k2}(i)=subs(cs{k1,k2}(i),c(j),cs{k1,k2}(j));
            end
        end
    end
end
\end{lstlisting}
\end{minipage}

\begin{minipage}{\linewidth}
\begin{lstlisting}[caption=MATLAB code for step 11 (Algorithm \ref{Alg_2})., label=Mat7,numbers=none ]
for k1=1:n1
    for k2=1:n2
        for i=1:s 
            sf=solve(y2{k1,k2}(i),c(i));
            cs{k1,k2}(i)=sf;
            y2{k1,k2}=subs(y2{k1,k2},c(i),sf);
        end
    end
end
\end{lstlisting}
\end{minipage}

Algorithm \ref{Alg_3} is, in fact, an adaptation of that described in \cite{AlSaIbDe26}, which is responsible for generating, from a polynomial $P_{m}(A)$, with $m=4s$, the terms $c$ that appear in $Y_1$ such that $Y_1 = P_{m}(A)$. In our case, it returns the terms ${c_{2s + 1:5s}}$ of $Y_1$, for any of the three alternatives expressed in formulas \eqref{Eq_Y11}-\eqref{Eq_Y13}, from the coefficients ${\bar c_{2s + 1:6s}}$ that comprise a $4s$-degree polynomial $\bar Y_1$ defined in \eqref{Eq_Y1m}. Since $Y_1$ is expressed as a function of $Y_0$, the terms $c_{5s+1:6s:}$ from $Y_0$ will  appear in $Y_1$ and should also be calculated as well. However, these components of $Y_0$ have already been determined by Algorithm \ref{Alg_2} in elements $\bar c_{6s+1:7s}$.  Therefore, they are provided as input data to Algorithm \ref{Alg_3}. In this way, the complexity of the SNEs is reduced, and the number of equations and unknowns to be solved decreases from $4s$ to $3s$. 

\begin{algorithm}[H]
	\DontPrintSemicolon
	\caption{$c=f_2(p,j,\bar c)$: Given the vector $p = [{p_1}, \cdots ,{p_{4s}}]$, which contains the coefficients of a polynomial $P_m(A)$, except for $p_0=0$, of degree $m=4s$, and the parameter $j \in \left\{ {1,2,3}\right\}$, this algorithm provides the vector $c$ with the $4s$ terms that appear in the polynomial $Y_1$ from \eqref{Eq_Y11}, \eqref{Eq_Y12}, or \eqref{Eq_Y13}, according to the considered value of $j$, such as $Y_1=P_m(A)$. The algorithm also requires as input a $s$-component vector $\bar c$ containing the previously known solutions  for the $s$ highest-order terms of $c$.}
	\label{Alg_3}
	Construct the polynomial $Y_1$ from the formulas \eqref{Eq_Y11}, \eqref{Eq_Y12}, or \eqref{Eq_Y13}, according to the value of $j$. Obtain its corresponding coefficient vector $y_1$ \eqref{Eq_Y012p} \;
	In the SNEs ${({y_1} - p = 0)_{1:3s}}$, substitute the unknowns ${c_{3s + 1:4s}}$ with the values of ${\bar c }$\;
	Solve the SNEs ${({y_1} - p = 0)_{1:3s}}$ for the coefficients ${ c_{1:3s}}$\;
\end{algorithm}



The MATLAB implementations of the algorithms described above are publicly available in the function \texttt{MatrixPolEval2}, which can be accessed at the website \url{https://github.com/hipersc/MatrixPolynomials/}. In this context, the suffix 2 in the function name explicitly denotes the reduction of two matrix-matrix multiplications relative to the standard PS baseline.

To process matrix polynomials reaching degrees $m \ge 18$, the fundamental sequence defined by \eqref{Eq_Y0}--\eqref{Eq_Y2} can be seamlessly embedded within a generalized PS framework, analogous to the nesting structure proposed in \cite[Eq.~(52)]{sastre2018efficient}. By decomposing the target degree as $m = 6s + q$ with $q \ge 0$, the corresponding hybrid evaluation function, denoted as $Z_{2qs}(A)$, is mathematically articulated as:
\begin{equation}\label{eq:z2qs}
\begin{aligned}
    Z_{2qs}(A) = &\left( \dots \left( \left( Y_2 A^s + \sum_{i=1}^s p_{q-i} A^{s-i} \right) A^s + \sum_{i=1}^s p_{q-s-i} A^{s-i} \right) A^s + \dots \right. \\
    &\quad \left. + \sum_{i=1}^s p_{q-(t-1)s-i} A^{s-i} \right) A^r + \sum_{j=0}^{r-1} p_j A^j.
\end{aligned}
\end{equation}
In this expression, the parameter $t = \lfloor q/s \rfloor$ dictates the total number of complete computational sub-blocks of degree $s$, while $r = q \bmod s$ corresponds to the residual degree of the final polynomial block, strictly satisfying the condition $s > r \ge 0$. A function called \texttt{z2qs}, that evaluates matrix polynomials from the results provided by the function \texttt{MatrixPolEval2}, is also available at the previously mentioned website. It implements expressions~\eqref{Eq_Y0}--\eqref{Eq_Y2} and~\eqref{eq:z2qs}.

\begin{Theorem} \label{thm:2Msaving}
Let $P_m(A)$ be a matrix polynomial of degree $m$. Assume that the algebraic system defining the $6s$ evaluation scheme admits numerically stable solutions. For an integer parameter $s \ge 3$ and $m = 6s + q$ with $q \ge 0$, $P_m(A)$ can be evaluated utilizing the hybrid PS configuration given by \eqref{eq:z2qs}, yielding a total computational cost of:
\begin{equation}\label{eq:C_z2qs}
    C_{Z_{2qs}}(m) = s + 2 + \left\lceil \frac{q}{s} \right\rceil.
\end{equation}
Furthermore, for degrees $m \in \{18, 21, 24, 26, 27, 28\}$ and all $m \ge 30$, there exists an optimal parameter combination $(s, q)$ such that $C_{Z_{2qs}}(m) = C_{PS}(m) - 2$, thereby guaranteeing a consistent reduction of two matrix-matrix multiplications compared to the optimal PS method.
\end{Theorem}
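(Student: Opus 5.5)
The plan has two parts: a direct operation count for the first claim, then a reduction of the second claim to a comparison of two minimisations of the same function $s+\lceil m/s\rceil$ over different ranges of $s$.

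\emph{Counting the cost of \eqref{eq:z2qs}.} I would count matrix products stage by stage.
\begin{itemize}
\item Forming the powers $A^2,\dots,A^s$ costs $s-1$ products.
\item With these powers, $Y_0$ in \eqref{Eq_Y0} costs one product, namely $A^s$ times a combination of $A,\dots,A^s$.
\item $Y_1$ in any of the variants \eqref{Eq_Y11}--\eqref{Eq_Y13} costs one product, the product of the two bracketed factors. The additive terms only use $Y_0$ and the stored powers.
\item $Y_2$ in \eqref{Eq_Y2} costs one more product.
\end{itemize}
So $Y_2$ costs $s+2$ products. The hypothesis that the $6s$ system has stable solutions, obtained through Corollary~\ref{cor:reduced_system} and Algorithms~\ref{Alg_1}--\ref{Alg_3}, ensures that the coefficients exist, so that $Y_2=\sum_{k=q}^{m}p_kA^{k-q}$.

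The outer Horner-type nesting in \eqref{eq:z2qs} then adds one product per multiplication by $A^s$, which gives $t=\lfloor q/s\rfloor$ products. If $r=q \bmod s>0$, it adds one final multiplication by $A^r$, which is already stored because $r<s$. Since $t+[r>0]=\lceil q/s\rceil$, this gives \eqref{eq:C_z2qs}. I would also check briefly, by comparing exponents, that \eqref{eq:z2qs} reproduces every coefficient $p_0,\dots,p_{q-1}$.

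\emph{Reformulating the comparison.} Substituting $q=m-6s$ gives $\lceil q/s\rceil=\lceil m/s\rceil-6$. Hence
\[
C_{Z_{2qs}}(m)=s+\left\lceil \frac{m}{s}\right\rceil-4,\qquad 3\le s\le \frac{m}{6}.
\]
By \eqref{eq:PScost}, $C_{PS}(m)=\min_{s\ge1}\bigl(s+\lceil m/s\rceil\bigr)-2$. So it suffices to show that some minimiser $s^*$ of $g(s)=s+\lceil m/s\rceil$ lies in the admissible range $3\le s^*\le m/6$. I would use the classical fact that $\min_s g(s)=\lceil 2\sqrt m\rceil$, attained at $\lfloor\sqrt m\rfloor$ or $\lceil\sqrt m\rceil$, as recalled after \eqref{eq:PScost}.

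\emph{Large $m$.} For $m\ge 36$ we have $3\le\lfloor\sqrt m\rfloor\le\sqrt m\le m/6$, so it is enough that $\lfloor\sqrt m\rfloor$ attains the minimum of $g$.

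This is the main obstacle. For $m=k^2+j$ with $0\le j\le 2k$, one has to show that $g(k)=k+\lceil (k^2+j)/k\rceil$ equals $\lceil 2\sqrt m\rceil$.
\begin{itemize}
\item For $j\le k$: $g(k)=2k+1$ if $j\ge1$ and $2k$ if $j=0$, which matches $\lceil 2\sqrt m\rceil$.
\item For $k<j\le 2k$: $g(k)=2k+2$, and $\lceil2\sqrt m\rceil=2k+2$ because $m>k^2+k$ implies $4m>(2k+1)^2$.
\end{itemize}
So $\lfloor\sqrt m\rfloor$ is always optimal, and the case $m\ge36$ follows. If this direct argument gave trouble, I would fall back on checking $s=\lceil\sqrt m\rceil$ for $m\ge 49$, where $\sqrt m+1\le m/6$.

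\emph{Remaining degrees.} The degrees $m\in\{18,21,24,26,27,28\}\cup\{30,\dots,35\}$ are finitely many, and I would settle them with a small table. For each, I would exhibit an admissible $s\in\{3,4,5\}$ with $s\le m/6$ and compare $s+\lceil m/s\rceil-4$ with $\lceil2\sqrt m\rceil-2$. Examples:
\begin{itemize}
\item $m=18$: $s=3$ gives $C_Z=5$ and $C_{PS}=7$.
\item $m=21$: $s=3$ gives $3+7-4=6$, while $C_{PS}=\lceil 2\sqrt{21}\rceil-2=8$.
\item $m=30$: $s=5$ gives $5+6-4=7$, while $C_{PS}=\lceil 2\sqrt{30}\rceil-2=9$.
\end{itemize}
The listed values follow similarly. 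This table also explains why degrees such as $19$, $20$ and $25$ are excluded, which is a useful consistency check.
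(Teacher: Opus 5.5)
Your proposal is correct. Your cost count is the same as the paper's: $s-1$ products for the powers, one product each for $Y_0$, $Y_1$, $Y_2$, then $\lceil q/s\rceil$ for the outer nesting. Your bookkeeping is slightly more careful, via $t+[r>0]=\lceil q/s\rceil$ and the remark that $A^r$ is already stored.

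The comparison with PS is where you genuinely diverge, and your route is the sounder one. The paper tabulates the degrees below $30$. For $m\ge 30$ it only argues that the intervals $[6s,7s]$, i.e.\ pairs with $q\le s$, cover every degree without gaps. It never checks the cost identity on those intervals beyond $s=5$, and those parameter choices in fact fail to deliver it. At $m=64$ the only pair with $q\le s$ is $(s,q)=(10,4)$, giving $C_{Z_{2qs}}=13$ against $C_{PS}(64)=14$. At $m=100$ the pair $(15,10)$ gives $18=C_{PS}(100)$, so no saving at all.

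Your rewriting $C_{Z_{2qs}}(m)=s+\lceil m/s\rceil-4$ for $3\le s\le m/6$ reduces the claim to whether a PS-optimal block size is admissible. You check that $\lfloor\sqrt m\rfloor$ is always PS-optimal and lies in $[3,m/6]$ once $m\ge 36$, which proves the statement for all large $m$. The resulting $q=m-6\lfloor\sqrt m\rfloor$ is typically much larger than $s$, e.g.\ $(s,q)=(8,16)$ at $m=64$ gives $12=C_{PS}(64)-2$.

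The same reformulation also gives $C_{Z_{2qs}}(m)\ge C_{PS}(m)-2$ for every admissible $s$. That is what justifies the word ``optimal'' in the statement and explains why degrees such as $19$, $20$, $25$, $29$ only get a $1M$ saving; the paper's argument supplies neither. The finite table for $m\in\{18,21,24,26,27,28\}\cup\{30,\dots,35\}$ agrees with the paper's case list.
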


\begin{pf}
The total computational cost formulated in \eqref{eq:C_z2qs} is derived from the sequence of evaluation stages. First, evaluating the underlying $6s$-degree polynomial structure requires the initial computation of the matrix powers $A^2, A^3, \dots, A^s$, which consumes $s-1$ multiplications. The subsequent assembly of the nested sub-polynomials $Y_0$, $Y_1$, and $Y_2$ inherently demands exactly one additional matrix multiplication each. Thus, the base cost to resolve the $6s$-degree structure is exactly $(s-1) + 3 = s+2$ products. In accordance with the hybrid PS nesting methodology, capturing the residual $q$ degrees requires $\lceil q/s \rceil$  products by the precomputed highest power $A^s$. Aggregating these operations establishes the cost $C_{Z_{2qs}}(m) = s + 2 + \lceil q/s \rceil$.

To verify the generalized $2M$ reduction relative to the standard PS cost $C_{PS}(m)$, we analyze the availability of the targeted degrees:

\begin{itemize}
    \item For the specific target degrees $m<30$, appropriate $(s, q)$ pairs strictly guarantee the desired $2M$ savings:
    \begin{itemize}
        \item $m=18$ ($s=3, q=0$): $C_{PS}(18) = 7$ and $C_{Z_{2qs}}(18) = 5$. 
        \item $m=21$ ($s=3, q=3$): $C_{PS}(21) = 8$ and $C_{Z_{2qs}}(21) = 6$. 
        \item $m=24$ ($s=4, q=0$): $C_{PS}(24) = 8$ and $C_{Z_{2qs}}(24) = 6$.
        \item $m=26$ ($s=4, q=2$): $C_{PS}(26) = 9$ and $C_{Z_{2qs}}(26) = 7$. 
        \item $m=27$ ($s=4, q=3$): $C_{PS}(27) = 9$ and $C_{Z_{2qs}}(27) = 7$. 
        \item $m=28$ ($s=4, q=4$): $C_{PS}(28) = 9$ and $C_{Z_{2qs}}(28) = 7$.
    \end{itemize}
    
    \item For general degrees $m \ge 30$, we must prove that the formula provides continuous degree coverage without discontinuities. By fixing a specific $s \ge 5$, the scheme robustly spans the degree interval $m \in [6s, 6s+s]$ by iterating $q \in \left\{ {0,1, \cdots ,s} \right\}$. 
    \begin{itemize}
        \item For $s=5$, the supported degree range spans from $m=30$ ($q=0$) to $m=35$ ($q=5$). Over this interval, $C_{PS}(m)$ increments from $9$ to $10$ while $C_{Z_{2qs}}(m)$ correspondingly increases from $7$ to $8$.
        \item The subsequent continuous range activates at $m = 6\times (s+1)$. Moving from $s=5$ to $s=6$, the new base degree initiates exactly at $6\times 6 = 36$, seamlessly continuing from the preceding upper boundary of $35$. 
    \end{itemize}
    More broadly, for any $s \ge 6$, the maximum bound of the currently evaluated interval sits at $7s$, whereas the lowest bound of the next incremental interval starts at $6\times (s+1) = 6s+6$. Because the inequality $7s \ge 6s+6$ inherently holds for all $s \ge 6$, the functional intervals naturally overlap. Consequently, there are absolutely no gaps in the reachable parameter spaces for any $m \ge 30$, ensuring that the proposed algebraic evaluation consistently preserves the stated $2M$ performance advantage against the PS framework limit.
\end{itemize}
\end{pf}

Finally, it should be noted that for polynomial degrees bounded within the interval $12 \le m < 30$ with $m \notin \{18, 21, 24, 26, 27, 28\}$, the proposed implementation remains fully operational (utilizing $s \in \{2, 3, 4\}$). Within this domain, it is easy to show that the computational advantage relative to the optimal PS scheme is constrained to a $1M$ reduction for degrees $m \in \{12,$ 13, 14, 19, 20, 22, 23, 25, $29\}$, whereas no comparative savings are achieved for $m \in \{15, 16, 17\}$. 

To summarize the theoretical computational savings established in this section, Figure~\ref{fig:costcomparison} illustrates the reachable polynomial degrees $m$ for a given number of matrix-matrix multiplications. The classical PS method serves as the computational baseline. The \texttt{MatrixPolEval1} scheme from \cite{AlSaIbDe26} achieves a $1M$ reduction for degrees $m=8$, $10$, and all $m \ge 12$ (corresponding to costs of $5M$ or higher). For a computational budget of 9 or more matrix multiplications, the proposed \texttt{MatrixPolEval2} scheme consistently evaluates higher polynomial degrees, yielding a $2M$ savings relative to the PS baseline. Finally, the specific scheme for $m=20$ requiring 5 matrix multiplications \cite{sastre2025beyond} is plotted to illustrate an isolated ad-hoc solution, noting that the lower cost bounds for matrix polynomial evaluations were characterized in \cite{JL25}.

\begin{figure}[htbp]
    \centering
    \includegraphics[width=0.75\textwidth]{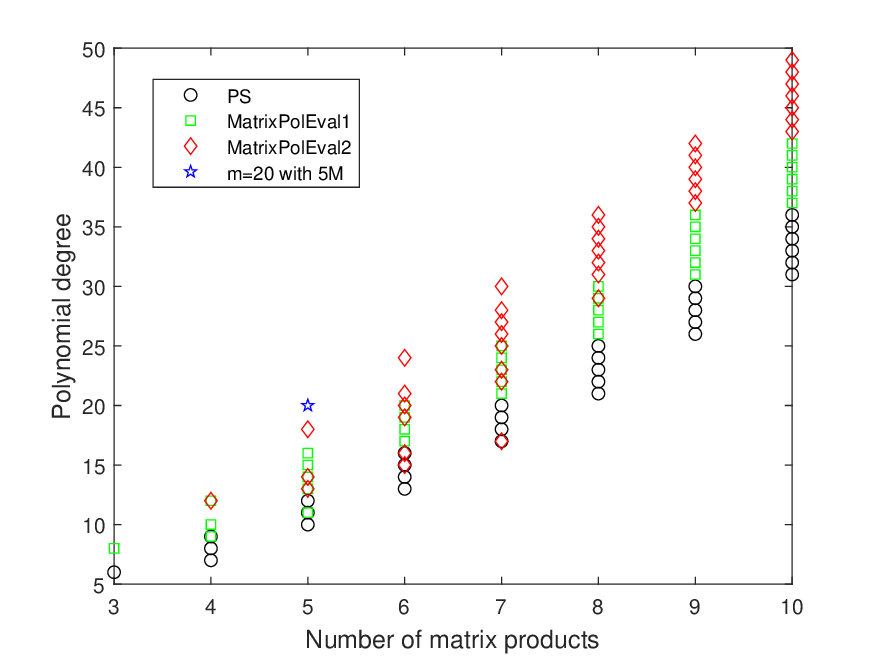}
    \caption{Reachable polynomial degrees $m$ as a function of the computational cost in matrix-matrix multiplications. The plot compares the classical Paterson--Stockmeyer (PS) baseline, the $1M$ reduction scheme (\texttt{MatrixPolEval1.m}), the proposed $2M$ reduction scheme (\texttt{MatrixPolEval2.m}), and the specific scheme for $m=20$ requiring 5 products \cite{sastre2025beyond}. For costs of 9 or more matrix multiplications, \texttt{MatrixPolEval2.m} achieves a savings of two matrix products compared to the PS baseline.}
    \label{fig:costcomparison}
\end{figure}

\section{Computational experiments}\label{sec:numericalresults}
Below are three case studies that expose the numerical and computational properties of the matrix polynomial evaluation schemes obtained using the proposed algorithms. All implementations and experimental evaluations were carried out on an Apple computer equipped with an M4 processor, 14-core CPU, 20-core GPU, 24GB RAM, and 1TB SSD. The version of MATLAB used was R2025b.
\subsection{Case study 1}
In this case study, we present the results obtained from computing the coefficients $c_{0:m}$ via Algorithm \ref{Alg_1} when target Taylor polynomials for the matrix exponential and logarithmic functions are used. Tables \ref{tabla_errores_exp} and \ref{tabla_errores_log} show the error $\epsilon$ defined in \eqref{eq:epsilon} where $y_{2_k}$ represents the $k$-th component of the $m$-degree polynomial $Y_2$ from Expression \eqref{Eq_Y012p} and $p_k$ is the $k$-th element of the Taylor series expansions $P_m(A)$ for the matrix exponential or logarithmic functions.
More specifically, $y_{2_k}$ is the corresponding polynomial coefficient reconstructed first by using the coefficient solution set $c_{0:m}$ in VPA with 64 decimal digits, and later converted to single or double precision. Thus:

\begin{itemize}
    \item $\epsilon_{\text{vpa}(64)}$ denotes the numerical error \eqref{eq:epsilon} incurred when using symbolic evaluation with 64 decimal digits of precision in VPA.
    \item $\epsilon_{\text{single}}$ represents the error \eqref{eq:epsilon} introduced when the solution set is rounded to single precision.
    \item $\epsilon_{\text{double}}$ stands for the error \eqref{eq:epsilon} introduced when the solution set is rounded to double precision.
\end{itemize}
In the first row of Tables \ref{tabla_errores_exp} and \ref{tabla_errores_log}, the considered polynomial degrees $m$ appear followed by a dash and number of the formula \eqref{Eq_Y11}, \eqref{Eq_Y12}, \eqref{Eq_Y13} that gave the least error $\epsilon$. 
From these results, we can highlight that:
\begin{itemize}
    \item Variations \eqref{Eq_Y11} and \eqref{Eq_Y12} usually deliver the most accurate results. 
	\item All errors grow as the degree $m$ increases.
	\item The errors in single and double precision are high for values of $m$ greater than or equal to 36.
\end{itemize}

\begin{table}[H]	
    \centering
		\caption{Errors for the exponential function.}
		\begin{tabular}{|c|c|c|c|c|c|c|}
			\hline
			$m-(Y_1)$&12--\eqref{Eq_Y11} & 18--\eqref{Eq_Y12}&24--\eqref{Eq_Y11}&30--\eqref{Eq_Y12}&36--\eqref{Eq_Y13}&42--\eqref{Eq_Y11}\\ \hline
			$\epsilon_{\text{vpa(64)}}$ 
			& $3.78\times 10^{-65}$ 
			& $1.57\times 10^{-64}$ 
			& $5.71\times 10^{-65}$ 
			& $7.75\times 10^{-65}$ 
			& $5.09\times 10^{-63}$ 
			& $1.67\times 10^{-60}$ \\

            $\epsilon_{\text{single}}$  
			& $6.39\times 10^{-8}$  
			& $4.41\times 10^{-7}$  
			& $1.09\times 10^{-6}$  
			& $1.59\times 10^{-6}$  
			& $2.53\times 10^{-5}$  
			& $\infty$             \\

			$\epsilon_{\text{double}}$ 
			& $1.81\times 10^{-17}$ 
    		& $1.75\times 10^{-17}$ 
			& $1.78\times 10^{-17}$ 
			& $1.94\times 10^{-17}$ 
			& $3.18\times 10^{-14}$ 
			& $3.12\times 10^{-12}$ \\ \hline            
		\end{tabular}
		\label{tabla_errores_exp}
\end{table}

\begin{table}[H]	
	\centering
		\caption{Errors for the logarithm function.}
		\begin{tabular}{|c|c|c|c|c|c|c|}
			\hline
			$m-(Y_1)$&12--\eqref{Eq_Y11}&18--\eqref{Eq_Y11}&24--\eqref{Eq_Y11}&30--\eqref{Eq_Y12}&36--\eqref{Eq_Y11}&42--\eqref{Eq_Y12}\\ \hline
			$\epsilon_{\text{vpa(64)}}$  
			& $2.58\times 10^{-65}$ 
			& $1.70\times 10^{-64}$ 
			& $1.71\times 10^{-65}$ 
			& $1.93\times 10^{-65}$ 
			& $2.00\times 10^{-65}$ 
			& $4.80\times 10^{-63}$ \\

			$\epsilon_{\text{single}}$ 
			& $4.50\times 10^{-8}$  
			& $3.01\times 10^{-7}$  
			& $5.61\times 10^{-8}$  
			& $6.68\times 10^{-8}$  
			& $1.25\times 10^{-7}$  
			& $1.60\times 10^{-6}$ \\            
			
			$\epsilon_{\text{double}}$ 
			& $1.78\times 10^{-16}$ 
			& $7.09\times 10^{-16}$ 
			& $1.95\times 10^{-16}$ 
			& $1.28\times 10^{-16}$ 
			& $3.19\times 10^{-16}$ 
			& $7.13\times 10^{-15}$ \\ \hline
		\end{tabular}
		\label{tabla_errores_log}
\end{table}

Results listed in Tables \ref{tabla_errores_exp} and \ref{tabla_errores_log} were generated by executing the following instructions for the exponential function:
\begin{verbatim}
ER=[];
for m=12:6:42
    p=sym(1./factorial(0:m));
    s=m/6;
    [c_vpa,c_double,c_single,type_pol,er_min,erd_min,ers_min,savings,s,q]=...
    MatrixPolEval2('exp',p,s,64);
    ER=[ER [er_min;erd_min;ers_min]];
end
\end{verbatim}
and for the logarithm function:
\begin{verbatim}
ER=[];
for m=12:6:42
    p=0;
    for i=1:m
        p=[p sym((-1)^(i+1)/i)];
    end
    s=m/6;
    [c_vpa,c_double,c_single,type_pol,er_min,erd_min,ers_min,savings,s,q]=...
    MatrixPolEval2('log',p,s,64);
    ER=[ER [er_min;erd_min;ers_min]];
end
\end{verbatim}
\subsection{Case study 2}
This subsection demonstrates that, under certain conditions, employing the formula $Z_{2qs}$ from \eqref{eq:z2qs} can yield superior numerical accuracy compared to the polynomial $Y_2$ of degree $m = 6s$ from \eqref{Eq_Y2}. Consider the computation of the matrix exponential via Taylor polynomials for the matrix
$$A = \begin{pmatrix} 1 & 2 \\ 0 & 3 \end{pmatrix},$$
whose exact exponential is given by
$$e^A = \begin{pmatrix} e & e^3 - e \\ 0 & e^3 \end{pmatrix}.$$
To evaluate the precision of both approaches, the normwise relative error is calculated as
\begin{equation}\label{eq:normwise_err}
    Er(A) = \frac{\| e^A - \hat{e}^A \|_2}{\| e^A \|_2},
\end{equation}
where $e^A$ represents the exact solution and $\hat{e}^A$ is the computed approximation.

To obtain the coefficients of $Y_2$ and the errors incurred at different precisions when computing the Taylor polynomial of degree $m = 36$ of the exponential function with 64 \texttt{vpa} digits of precision, we invoke the function \texttt{MatrixPolEval2} as follows:
\begin{verbatim}
p = 1./factorial(sym(0:36));
[c_vpa,c_double,c_single,type_pol,er_min,erd_min,ers_min,savings,s,q] = ...
MatrixPolEval2('Exp',p,6,64)
\end{verbatim}
Then, we get, among other results, the following output values:
\begin{verbatim}
type_pol = 3,erd_min = 3.1786e-14, savings = 2, s = 6, q = 0.
\end{verbatim}
Using the output data provided by the \texttt{MatrixPolEval2} function, the exponential of matrix $A$ is approximated as follows:
\begin{verbatim}
[y2,np]=z2qs([1 2;0 3],c_double,s,type_pol)
\end{verbatim}
with
\begin{verbatim}
y2 =  2.7183   17.3673
         0     20.0855
np = 8
\end{verbatim}
and where \texttt{np} represents the number of matrix products required. Taking into account that the exact exponential of the matrix $A$, with 64 digits of precision, can be computed as
\begin{verbatim}
digits(64)
E=[exp(vpa(1)) exp(vpa(3))-exp(vpa(1));0 exp(vpa(3))]
\end{verbatim}

the resulting normwise relative error is $Er(A) \approx 1.865 \times 10^{-14} \gg u = 2^{-53} \approx 1.11 \times 10^{-16}$. This observed inaccuracy for $Y_2$ is consistent with the stability test warning presented in Table \ref{tabla_errores_exp}. Specifically, the test anticipated a significant potential rounding error when computing the matrix exponential via the $6s$ formulation with $s=6$, given that $\epsilon_{\text{double}} = 3.1786 \times 10^{-14} \approx 3.18 \times 10^{-14} \gg u$.

However, if we employ $Z_{2qs}$ with $m=35$ and $s=5$ by invoking:
\begin{verbatim}
p = 1./factorial(sym(0:35));
[c_vpa,c_double,c_single,type_pol,er_min,erd_min,ers_min,savings,s,q] = ...
MatrixPolEval2('Exp',p,5,64)
\end{verbatim}
this yields the following output values:
\begin{verbatim}
type_pol = 2, erd_min = 1.3334-16, savings = 2, s = 5, q = 5.
\end{verbatim}
where $\epsilon_{\text{double}} = 1.3334 \times 10^{-16} < 10u$, giving a reasonable value for the stability test. Following the same steps as for $m=36$ and $s=6$, the resulting normwise relative error is now $Er(A) \approx 3.026 \times 10^{-17} < u$, providing an accurate matrix exponential approximation.

Ultimately, this case study illustrates that, in certain scenarios, combining the $6s$ formulation with the Paterson-Stockmeyer scheme (as in $Z_{2qs}$) yields smaller rounding errors than relying on the $6s$ formulation alone ($Y_2$), even when employing a lower polynomial degree.

\subsection{Case study 3}
A comparative study of the numerical properties and computational efficiency of the proposed algorithm with respect to several reference state-of-the-art methods is reported in this subsection. The experiments were carried out using the following MATLAB implementations:
\begin{itemize}
	\item \texttt{expm\_Euler\_PS}: a MATLAB function that computes the matrix exponential by using the Euler polynomials \cite{Alonso2023Euler}. The PS method is employed to evaluate the matrix polynomials.
    \item \texttt{expm\_Euler\_pol6s}: a MATLAB implementation, similar to the previous one, where PS method is replaced by expression \eqref{Eq_Y2}. Coefficients required to emulate those ones of Euler polynomials were computed via the Algorithm \ref{Alg_1}.
	\item  \texttt{expm\_new}: a MATLAB function that computes the matrix exponential by using the scaling and squaring technique with a Padé approximation \citep{AlHi09,higham2005scaling}.
\end{itemize}

For the numerical experiments, matrices of dimension 128 from the Matrix Computation Toolbox (MCT) \citep{higham2002test} and the EigTool MATLAB package (EMP) \citep{wright2009eigtool} were considered. A total of 52 matrices were selected from the MCT, and the following 16 ones from the EMP:
\begin{verbatim}
	 1 airy_demo          - An Airy operator.
	 2 basor_demo         - Toeplitz matrix (Basor-Morrison).
	 3 chebspec_demo      - First order Chebyshev differentiation matrix.
	 4 companion_demo     - A companion matrix.
	 5 convdiff_demo      - 1-D convection diffusion operator.
	 6 davies_demo        - Davies' example.
	 7 demmel_demo        - Demmel's matrix.
	 8 gaussseidel_demo   - Gauss-Seidel iteration matrices.
	 9 godunov_demo       - Godunov's matrix.
	10 hatano_demo        - Hatano-Nelson example.
	11 landau_demo        - An application from lasers.
	12 orrsommerfeld_demo - An Orr-Sommerfeld operator.
	13 randomtri_demo     - An upper triangular matrix with random entries.
	14 riffle_demo        - The riffle shuffle matrix.
	15 transient_demo     - A matrix with transient behaviour.
	16 twisted_demo       - A `twisted-Toeplitz' matrix.
\end{verbatim}

The matrices were appropriately scaled so that, in double-precision arithmetic, the error associated with the Euler polynomial approximations remains below the unit roundoff. Under these conditions, the scaling and squaring strategy is not required in the implementations of the functions \texttt{expm\_Euler\_PS} and \texttt{expm\_Euler\_pol6s}.

The reference solution, regarded as “exact” for comparison purposes, is obtained with the MATLAB routine \texttt{expm\_mp}, documented in \citep{fasi2018multiprecision}. This implementation is designed for multiprecision arithmetic and optimized for high-accuracy computations. It makes use of the Advanpix Multiprecision Computing Toolbox \cite{advanpix}, which introduces the \texttt{mp} numerical type. Through this data type, the matrix exponential can be evaluated using several algorithms, including Taylor- and Padé-based schemes, as well as methods relying on the Schur decomposition.

To obtain the "exact" matrix exponential $e^A$, we check whether there exist two different ways of computing the matrix exponential, using the function \texttt{expm\_mp}, such that the relative error is smaller than a prescribed tolerance, i.e, 
\[\frac{\|\hat{e}_1^A-\hat{e}_2^A\|_2}{\|\hat{e}_1^A\|_2}\le u,\]
where $\hat{e}_1^A$ and $\hat{e}_2^A$ are the matrix exponentials obtained by those two distinct methods and $u$ denotes the unit roundoff in double-precision floating-point arithmetic. If the above expression is fulfilled, $e^A=\hat{e}_1^A$ will be considered the exact solution. In the computations, 256 digits of precision were required.

In this test, the matrix exponential was computed for 49 of the 52 matrices in the MCT Toolbox. Matrices 16, 17, and 21 were removed from the comparison since a reliable reference solution could not be generated. Although no computational failures were observed for matrices 4 and 9 from the EMP package, these examples were excluded from the normwise relative error analysis (see Figure \ref{fig_normwise}) due to the corresponding values of the condition number being unreasonably large, infinite or NaN.

Figure \ref{fig_normwise} displays the normwise relative errors of the three methods, where the solid line represents the reference curve \(\text{cond} \times u\), and \text{cond} corresponds to the condition
number of the exponential function for each matrix considered. The relative errors of \texttt{expm\_Euler\_pol6s} closely follow this theoretical stability bound, indicating that the automated coefficient selection successfully preserves numerical stability. This is further illustrated by the performance profile in Figure \ref{fig_profile}, where the proposed method achieves the highest probability of producing the lowest relative error across the tested matrix spectrum.


\begin{figure}
	\includegraphics[width=0.75\textwidth]
    {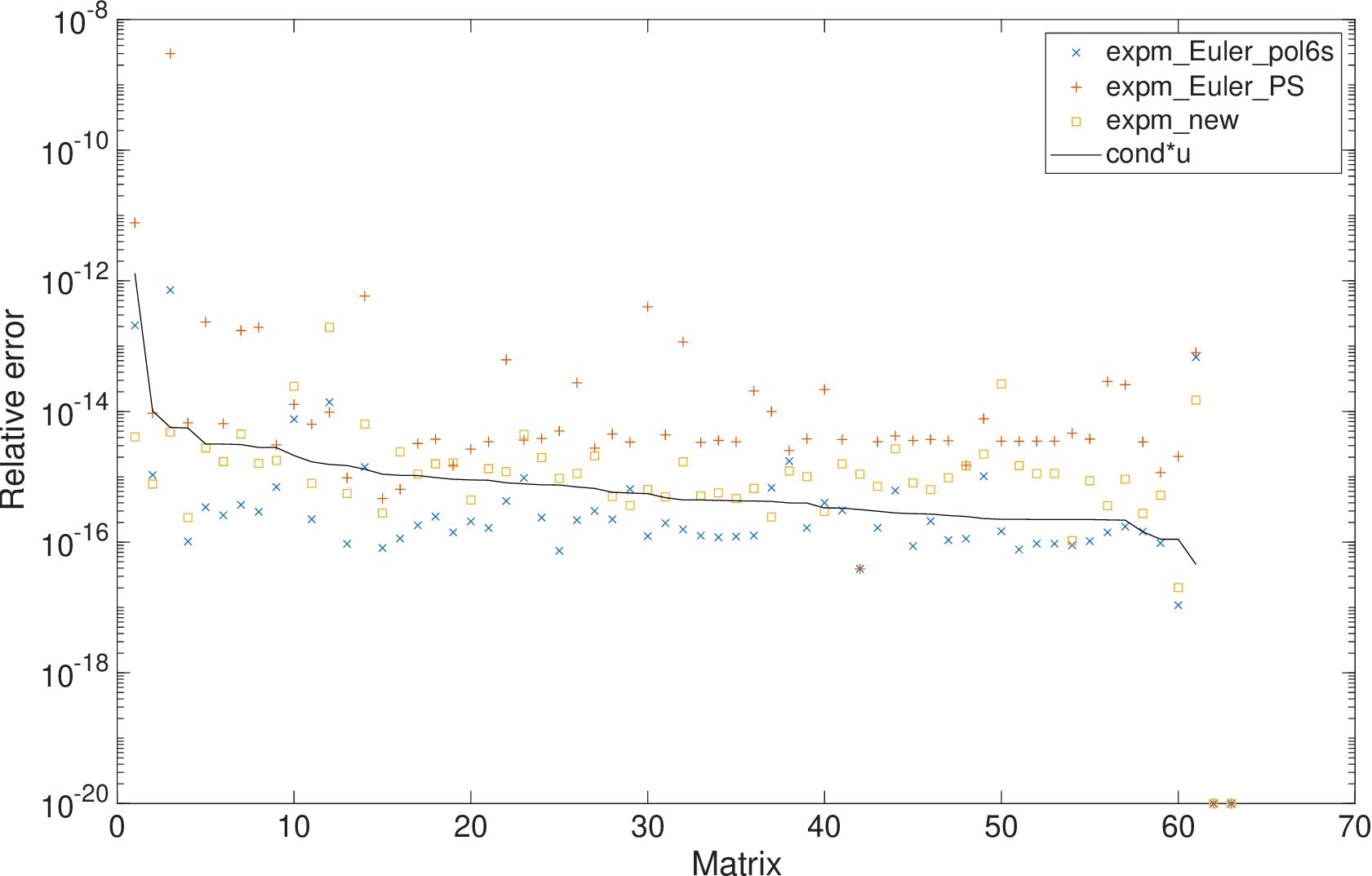}
    \centering
	\caption{Normwise relative errors for MATLAB functions \texttt{expm\_Euler\_pol6s}, \texttt{expm\_Euler\_PS} and \texttt{expm\_new}.}
	\label{fig_normwise}
\end{figure}

\begin{figure}
	\includegraphics[width=0.73\textwidth]{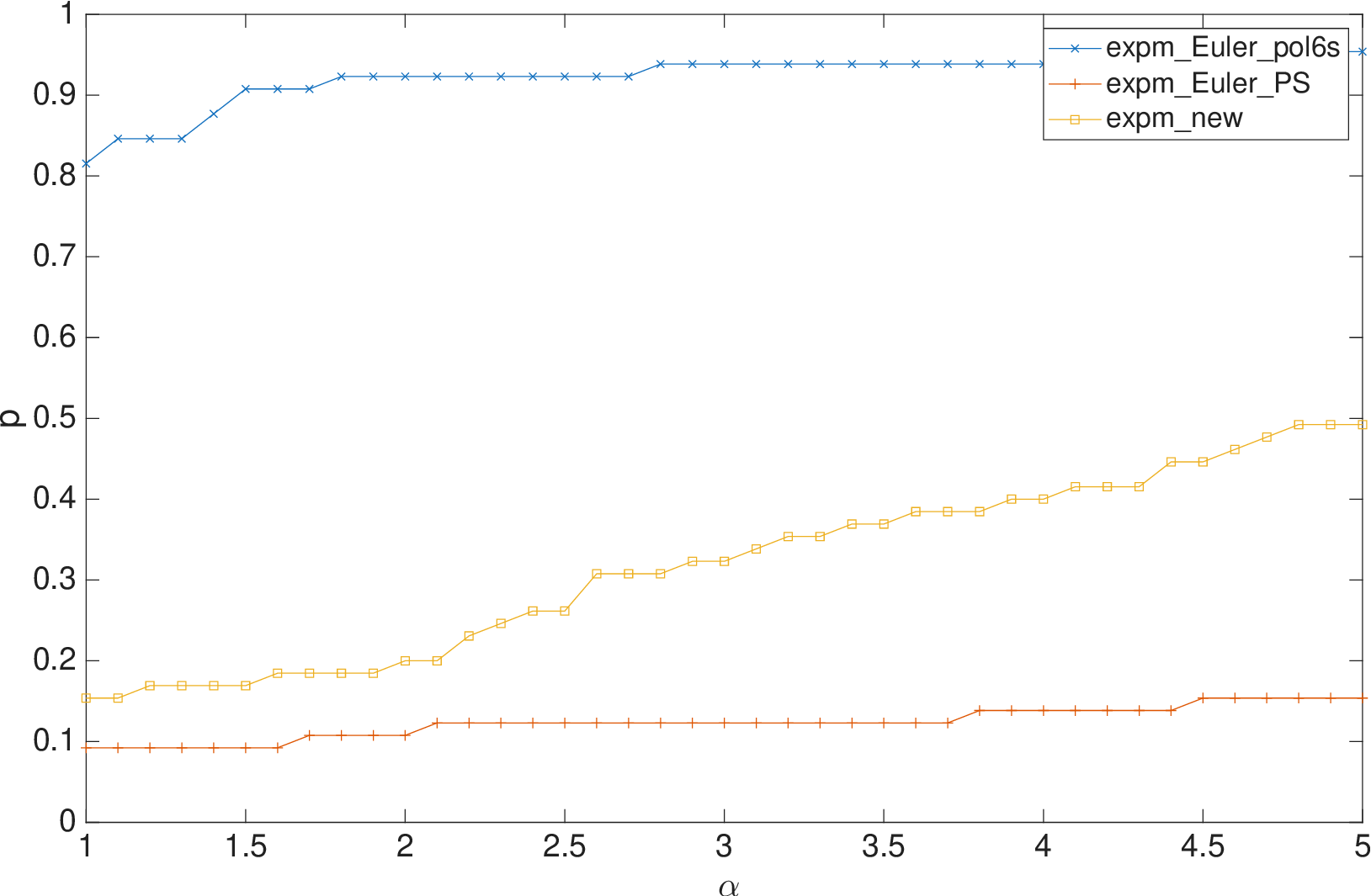}
	\centering    
	\caption{Performance profile (right) for MATLAB functions \texttt{expm\_Euler\_pol6s}, \texttt{expm\_Euler\_PS} and \texttt{expm\_new}.}
	\label{fig_profile}
\end{figure}

These graphical results are quantitatively supported by the statistics in Tables \ref{tabla_porcentajes_total} and \ref{tabla_porcentajes_intervalos}. The proposed  scheme yields a lower relative error (\(Er < Er_i\)) than \texttt{expm\_Euler\_PS} in \(90.77\%\) of the test cases, and outperforms \texttt{expm\_new} in \(83.08\%\) of the matrices. Conversely, the instances where the proposed method produces larger errors (\(Er > Er_i\)) are limited to \(6.15\%\) and \(13.84\%\) against \texttt{expm\_Euler\_PS} and \texttt{expm\_new}, respectively. 

\begin{table}[H]
	\caption{Percentage of matrices in which the error $Er$ incurred by function \texttt{expm\_Euler\_pol6s} is less than, greater than, or equal to the errors $Er_i$ committed by functions  \texttt{expm\_Euler\_PS} and \texttt{expm\_new}.}
	\label{tabla_porcentajes_total}
	\centering
	\begin{tabular}{lcc}
		\toprule
		&\texttt{expm\_Euler\_PS}&\texttt{expm\_new}\\
        \midrule
		$Er<Er_i$ & 90.77 & 83.08 \\
		$Er>Er_i$ &  6.15 & 13.84 \\
		$Er=Er_i$ &  3.08 & 3.08 \\
	\bottomrule
	\end{tabular}
\end{table}

\begin{table}[H]
	\caption{Proportion of matrices for which the error $Er_i$ incurred by the functions \texttt{expm\_Euler\_PS}, and \texttt{expm\_new} lies within an interval with endpoints $0.1Er$, $0.2Er$, $0.5Er$, $Er$, $2Er$, $5Er$, and $10Er$, where $Er$ is the error incurred by the function \texttt{expm\_Euler\_pol6s}.}
	\label{tabla_porcentajes_intervalos}
	\centering
	\begin{tabular}{lcc}
		\toprule
		&\texttt{expm\_Euler\_PS}&\texttt{expm\_new}\\
		\midrule        
		$Er_i<0.1Er$ & 1.54 & 3.08 \\
		$0.1Er\le Er_i<0.2Er$ & 0 & 0 \\
		$0.2Er\le Er_i<0.5Er$ & 3.08 & 3.08 \\
		$0.5Er\le Er_i<Er$ & 1.54 & 7.69 \\
		\midrule          
		$Er=Er_i$ & 3.08 & 3.08 \\
		\midrule          
		$Er<Er_i<2Er$ & 4.62 & 15.38 \\
		$2Er\le Er_i<5Er$ & 6.15 & 53.85 \\
		$5Er\le Er_i<10Er$ & 13.85 & 9.23 \\
		$10Er\le Er_i$ & 66.15 & 4.62 \\
	\bottomrule
	\end{tabular}
\end{table}

Table~\ref{tabla_porcentajes_intervalos} provides a more detailed assessment of the comparative numerical
behavior of the methods under consideration by classifying the errors
$Er_i$ incurred by \texttt{expm\_Euler\_PS} and \texttt{expm\_new} into intervals defined regarding the error $Er$ produced by
\texttt{expm\_Euler\_pol6s}. Unlike the aggregate percentages reported
in Table~\ref{tabla_porcentajes_total}, this distributional analysis allows us to quantify not only how often the proposed method is more accurate, but also the magnitude of the observed differences.

First, the proportion of matrices for which the competing methods
substantially outperform the proposed scheme (i.e., $Er_i < 0.1Er$) is
very small: $1.54\%$ for \texttt{expm\_Euler\_PS} and $3.08\%$ for
\texttt{expm\_new}. Likewise, the intermediate intervals
$0.1Er \leq Er_i < 0.5Er$, and also the range $0.5Er \leq Er_i < Er$, account for only marginal percentages.
These results indicate that significant improvements over
\texttt{expm\_Euler\_pol6s} are exceptional rather than systematic.

In contrast, the majority of cases are concentrated in intervals where
the competing methods exhibit larger errors than $Er$.
For \texttt{expm\_Euler\_PS}, $66.15\%$ of the matrices satisfy
$10Er \leq Er_i$, and an additional $13.85\%$ lie in the interval $5Er \leq Er_i < 10Er$. Therefore, in nearly $80\%$ of the test set,
the proposed $2M$-reduction scheme yields errors at least five times
smaller than those obtained with the classical PS implementation based on the same Euler polynomial approximation.

For \texttt{expm\_new}, although the distribution is less extreme,
the trend remains favorable to the proposed approach.
A majority of the matrices ($53.85\%$) fall into the interval
$2Er \leq Er_i < 5Er$, indicating that the scaling-and-squaring Pad\'e
algorithm frequently produces errors between two and five times larger
than those of \texttt{expm\_Euler\_pol6s}. Only $4.62\%$ of the cases satisfy $10Er \leq Er_i$,
which reflects a comparatively robust behavior, yet still not superior
in overall accuracy to \texttt{expm\_Euler\_pol6s}.

The percentage of exact ties ($Er = Er_i$), equal to $3.08\%$ in both
comparisons, confirms that the three algorithms exhibit distinct
numerical profiles, even when based on well-established approximation
strategies. The pronounced concentration of results in intervals where
$Er < Er_i$, particularly in the higher-multiple ranges, demonstrates
that the automated coefficient selection mechanism not only preserves
theoretical stability bounds but also effectively limits roundoff error
propagation in practical floating-point computations.

In summary, Table~\ref{tabla_porcentajes_intervalos} provides quantitative evidence that \texttt{expm\_Euler\_pol6s} combines computational savings with systematically improved numerical accuracy with respect to the other functions.
The observed improvements are consistent across the majority of the
tested matrices, rather than being confined to isolated instances.

\begin{table}[H]
	\centering
	\caption{Number of matrix products for matrices of dimension 128.}
	\label{tabla_numero_productos}
    \begin{tabular}{lccc}
		\toprule
		&\texttt{expm\_Euler\_pol6s}&\texttt{expm\_Euler\_PS}&\texttt{expm\_new}\\
		\midrule
		\textbf{} & 455& 585 & 554 \\
		\bottomrule
	\end{tabular}
\end{table}

Regarding computational cost, Table~\ref{tabla_numero_productos} lists the total number of matrix multiplications $M$ required to evaluate the entire testbed. Function \texttt{expm\_Euler\_pol6s} requires 455 matrix products, compared to 585 for \texttt{expm\_Euler\_PS} and 554 for \texttt{expm\_new}. This represents a reduction of approximately $22.2\%$ over the classic PS technique implemented in \texttt{expm\_Euler\_PS} and $17.8\%$ over the scaling and squaring Padé approach deployed in \texttt{expm\_new}, which is consistent with the theoretical $2M$ savings of the nested evaluation scheme.

\section{Conclusions}\label{sec:conclusions}

This work introduces a software-driven procedure to resolve the algebraic conditions of the $6s$ evaluation scheme, aiming for a theoretical computational savings of two matrix-matrix multiplications ($2M$) compared to the optimal Paterson--Stockmeyer (PS) method for degrees $m \in \{18, 21, 24, 26, 27, 28\}$ and all $m \ge 30$. Although similar $2M$ reductions have been successfully applied to specific expansions of the matrix exponential and logarithm in \cite{sastre2019boosting} and \cite{SaIb21}, generalizing this approach requires caution. Because the evaluation coefficients must be derived from systems of highly nonlinear multivariate equations, the existence of numerically stable solutions is not guaranteed a priori for every target polynomial.

To manage this limitation, the developed MATLAB tool, \texttt{MatrixPolEval2}, automates the algebraic system reduction and performs stability filtering using a hybrid relative-absolute error criterion. Additionally, the function \texttt{z2qs} evaluates matrix polynomials from the data provided by \texttt{MatrixPolEval2}. The practical utility of the framework was evaluated through three specific numerical experiments. For instance, tests on a Euler polynomial approximation of the matrix exponential confirmed that the scheme systematically achieves the $2M$ reduction while maintaining numerical accuracy comparable to the standard PS baseline and competitive with state-of-the-art Padé approximations. 

Moving forward, this software provides an exploratory diagnostic instrument for the scientific computing community. It allows researchers to systematically test whether these computational savings can be extended to polynomial approximations of other matrix functions, or to evaluate general matrix polynomials. Ultimately, the tool facilitates the search for more efficient evaluation paths, provided that the underlying mathematical conditioning of the target coefficients permits the extraction of stable solution sets.

\section*{CRediT authorship contribution statement}

\textbf{Javier Ib\'a\~nez:} Conceptualization, Methodology, Software, Formal analysis, Investigation, Validation, Data curation, Writing - original draft, Writing - review \& editing.

\textbf{Jorge Sastre:} Conceptualization, Methodology, Software,
Formal analysis, Investigation, Resources, Writing - original draft,
Writing - review \& editing, Supervision, Project administration,
Funding acquisition.

\textbf{Jose Miguel Alonso:} Software, Investigation, Validation, Formal
analysis, Writing - review \& editing.

\textbf{Emilio Defez:} Formal analysis, Validation, Writing - review
\& editing.

\section*{Declaration of Generative AI and AI-assisted technologies in the writing process}

During the preparation of this work the author(s) used Gemini in
order to improve the language, flow, and technical clarity of the
manuscript. After using this tool, the authors reviewed and edited
the content as needed and take full responsibility for the technical
accuracy and the final content of the publication.

\section*{Declaration of competing interest}

The authors declare that they have no known competing financial
interests or personal relationships that could have appeared to
influence the work reported in this paper.

\section*{Acknowledgements}\label{Ack}
This work has been supported by the Generalitat Valenciana Grant \\
CIAICO/2023/275.

\bibliographystyle{elsarticle-num}
\bibliography{bib_general}
\end{document}